\newtheorem{Theorem}{Theorem}[section]
\newtheorem{Definition}[Theorem]{Definition}
\newtheorem{Proposition}[Theorem]{Proposition}
\newtheorem{Lemma}[Theorem]{Lemma}
\newtheorem{Corollary}[Theorem]{Corollary}
\newtheorem{Remark}[Theorem]{Remark}
\newcommand{\N}{\mathbb N}
\newcommand{\RR}{{{\rm I} \kern -.15em {\rm R} }}
\newcommand{\C}{{{\rm l} \kern -.42em {\rm C} }}
\newcommand{\nat}{{{\rm I} \kern -.15em {\rm N} }}
\newcommand{\be}{\begin{equation}}
\newcommand{\ee}{\end{equation}}
\newcommand{\beq}{\begin{eqnarray}}
\newcommand{\eeq}{\end{eqnarray}}
\newcommand{\beqs}{\begin{eqnarray*}}
\newcommand{\eeqs}{\end{eqnarray*}}
\newcommand{\bt}{\begin{Theorem}}
\newcommand{\et}{\end{Theorem}}
\newcommand{\br}{\begin{Remark}}
\newcommand{\er}{\end{Remark}}
\newcommand{\bc}{\begin{Corollary}}
\newcommand{\ec}{\end{Corollary}}
\newcommand{\bl}{\begin{Lemma}}
\newcommand{\el}{\end{Lemma}}
\newcommand{\bd}{\begin{definition}}
\newcommand{\ed}{\end{definition}}
\renewcommand{\geq}{\geqslant}
\renewcommand{\leq}{\leqslant}
\title{Asymptotic analysis of a Cucker-Smale system \\ with leadership and distributed delay}
\author{
Cristina Pignotti\footnote{Dipartimento di Ingegneria e Scienze dell'Informazione e Matematica, Universit\`{a} di L'Aquila, Via Vetoio, Loc. Coppito, 67010 L'Aquila Italy, \texttt{pignotti@univaq.it}.}
\and Irene Reche Vallejo\footnote{\texttt{irenereche92@gmail.com}.}
}
\date{}
\begin{document}

\textwidth=160 mm

\textheight=225mm

\parindent=8mm

\frenchspacing

\maketitle

\begin{abstract}
We extend the analysis developed in \cite{PRV} in order to prove convergence to consensus results for a Cucker--Smale type model with hierarchical leadership and distributed delay. Flocking estimates are obtained for a general interaction potential with divergent tail. We analyze also the model when the ultimate leader can change its velocity. In this case we give a flocking result under suitable conditions on the leader's acceleration.
\end{abstract}

\vspace{5 mm}

\def\qed{\hbox{\hskip 6pt\vrule width6pt
height7pt
depth1pt  \hskip1pt}\bigskip}



\section{Introduction}
\label{pbform}

\setcounter{equation}{0}

The celebrated Cucker-Smale model has been introduced in \cite{CS1, CS2} as a model for flocking, namely for phenomena where autonomous agents reach a consensus based on limited environmental information.
Let us consider $N\in \N$ agents and let $(x_i(t), v_i(t))\in\RR^{2d},$ $i=1,\dots, N,$ be their phase-space coordinates. As usual $x_i(t)$ denotes the position of the $i^{\textrm{th}}$ agent and $v_i(t)$ the velocity.
The Cucker-Smale model reads, for $t>0,$
\begin{equation}\label{originary}
\begin{split}
\dot x_i(t)&= v_i(t),\\
\dot v_i(t)&=\sum_{j=1}^N \psi_{ij}(t)(v_j(t)-v_i(t)),\qquad i=1,\dots, N,
\end{split}
\end{equation}
where  the communication rates $\psi_{ij}(t)$ are of the form
\begin{equation}\label{potential}
\psi_{ij}(t)=\psi (\vert x_i(t)-x_j(t)\vert )\,,
\end{equation}
being
$\psi: [0, +\infty)\rightarrow (0, +\infty )$ a suitable non-increasing potential functional.

\begin{Definition}{\rm
We say that a solution of (\ref{originary}) converges to consensus (or flocking) if
\begin{equation}\label{cons}
\sup_{t>0} \vert x_i(t)-x_j(t)\vert <+\infty\quad\quad\mbox {\rm and}\quad \quad \lim_{t\rightarrow +\infty} \vert v_i(t)-v_j(t) \vert =0\,,\quad \forall \ i,j=1,\dots, N.
\end{equation}
}\end{Definition}

The potential function considered by Cucker and Smale in \cite{CS1, CS2} is $\psi(s)=\frac 1
{(1+s^2)^\beta }$ with $\beta\geq 0$. They proved that there  is
unconditional convergence to flocking whenever
 $\beta <1/2$.
In the case $\beta\geq 1/2$, they obtained a conditional flocking result, namely convergence to flocking under appropriate assumptions on the initial data. Actually,  unconditional flocking can be obtained also
for $\beta =1/2$ (see e.g. \cite{HL}).

The extension of the flocking result to cover the case of non symmetric communication rates is due to Motsch and Tadmor \cite{MT}. Other variants and generalizations have been proposed, e.g. more general interaction  potentials, cone-vision constraints, leadership (see e.g. \cite{Couzin, CuckerDong, HaSlemrod, Mech, MT_SIREV, Shen, Vicsek, Yates}), stochastic terms (\cite{CuckerMordecki, Delay2, HaLee}), pedestrian crowds (see \cite{Cristiani, Lemercier}), infinito-dimensional kinetic models  (see \cite{Albi, Bellomo, canuto, carfor, degond, HT, Toscani}) and control models
(see \cite{Borzi, Caponigro1, Caponigro2, PRT, Wongkaew}).

Here, we consider the Cucker-Smale system with hierarchical leadership introduced by
Shen \cite{Shen}. In this model
the agents are ordered in a specific way, depending on which other agents they are leaders of or led by.
This reflects natural situations, e.g. in animals groups, where some agents are more  influential
than the others. We also add a distributed delay term, namely we assume that the agent $i$ adjusts its velocity depending on the information received from other agents on a time interval $[t-\tau , t].$ Indeed, it is natural to assume that there is a time delay in the information's transmission from an agent to the others.
The case of CS-model with hierarchical leadership and a pointwise time delay has been recently studied by the authors (\cite{PRV}). Other
models with (pointwise) time delay, without leadership, have been considered in \cite{Choi, ChoiLi, Delay1, PT}, while  for other extensions of Shen's results, without delay, we refer to \cite{Da, Li, LiY, LX, LHX}.

In order to present  our model, we first  recall some definitions from \cite{Shen}.

\begin{Definition}
The \textbf{leader set} $\mathcal{L}(i)$ of an agent $i$ in a flock $[1, 2,\dots, N]$ is the subgroup of agents that directly influence agent $i$, i.e. $\mathcal{L}(i)=\{j\;|\;\psi_{ij}>0\}$.
\end{Definition}

The Cucker-Smale system considered by Shen is then, for all $i\in\{1, \dots, N\}$ and $t>0$,

\begin{equation}\label{CSShen}
\begin{array}{l}
\displaystyle{
\frac{d x_i}{dt}={v_i},}\\
\displaystyle{
\frac{d {v_i}}{dt}=\sum_{j\in\mathcal{L}(i)}\psi_{ij}(t)({v_j}-{v_i}).}
\end{array}
\end{equation}
The interaction potential was analogous to the one of Cucker and Smale's papers and Shen proved convergence to consensus for $\beta <1/2.$

\begin{Definition}
A flock $[1, \dots, N]$ is an \textbf{HL-flock}, namely a flock under hierarchical leadership, if the agents can be ordered in such a way that:
\begin{enumerate}
	\item if $\psi_{ij}\neq0$ then $j<i$, and
	\item for all $i>1$, $\mathcal{L}(i)\neq \emptyset$.
\end{enumerate}
\end{Definition}

\begin{Definition}
For each agent $i=1,\dots, N,$
we define
the
$m$-th level leaders of $i$ as
$$\mathcal{L}^0(i)=\{i\},\;\; \mathcal{L}^1(i)=\mathcal{L}(i),\;\; \mathcal{L}^2(i)=\mathcal{L}(\mathcal{L}(i)),\;\; \dots,\;\; \mathcal{L}^m=\mathcal{L}(\mathcal{L}^{m-1}(i)),\;\; \dots$$
for $m\in\N$, and denote the set of all leaders of the agent $i$, direct or indirect, as
$$[\mathcal{L}](i)=\mathcal{L}^0(i)\;\cup\; \mathcal{L}^1(i)\;\cup\; \dots$$
\end{Definition}
For a fixed positive time $\tau$ and for every $t>0,$ our system is the  following:

\begin{equation}\label{CSShendelay}
\begin{array}{l}
\displaystyle{\frac{d {x_i}}{dt}(t)={v_i}(t),}\\
\displaystyle{
\frac{d {v_i}}{dt}(t)=\sum_{j\in\mathcal{L}(i)}\int_{t-\tau }^t\mu (t-s) \psi_{ij}(s )[{v_j}(s)-{v_i}(t)]\, ds ,}
\end{array}
\end{equation}
for all $i\in\{1, \dots, N\},$
with initial conditions, for $s\in [-\tau, 0],$
\begin{equation}\label{IC}
\begin{array}{l}
{x_i}(s)=x_i^0(s),\\
{v_i}(s)=v_i^0(s),
\end{array}
\end{equation}
for some continuous functions $x_i^0$ and $v_i^0,$ $i=1,\dots, N.$
The communication rates are
$$\psi_{ij}(t)=\psi(|x_i(t)-{x_j}(t)|)$$
for some
non-increasing, nonnegative, continuous interaction potential $\psi.$
The weight function $\mu : [0,\tau]\rightarrow \RR$ is assumed to be bounded and nonnegative,
with
\begin{equation}\label{boundmu}
\int_0^\tau \mu (s) ds = \mu_0 >0.
\end{equation}
We will prove a flocking result under the assumption
\begin{equation}\label{divergent}
\int_0^{+\infty} \psi(s) ds =+\infty\,.
\end{equation}
Then, our result extends and generalizes the one of Shen.
Note that in \cite{PRV} we have proved a flocking result in the case of a pointwise time delay.
We can formally obtain  the model studied in \cite{PRV}  if the weight $\mu(\cdot)$ is a Dirac delta function  centered at $t=\tau\,.$

The paper is organized as follows.
In section \ref{sect2} we give some preliminary properties of system (\ref{CSShendelay}), in particular we prove the positivity and boundedness properties for the velocities.
In section \ref{sect3} we will prove the flocking result for the system (\ref{CSShendelay}).
Finally, in section \ref{sect4} we will consider the model under hierarchical leadership and a free--will leader and we will prove flocking estimates under suitable growth assumptions on the acceleration of the free--will leader.

\section{Preliminary properties} \label{sect2}
\setcounter{equation}{0}

Before proving our main result, namely the convergence to consensus thorem, we need
some general properties of the Cucker-Smale model (\ref{CSShendelay}), such as the positivity property and the boundedness of the velocities.
The following propositions extend analogous results of \cite{Shen}.

\begin{Proposition}\label{positivity}
Let
us  consider the system of scalar  equations
\begin{equation}\label{positivitywithdelay}
	\begin{array}{l}
	\displaystyle{
\frac{du_i}{dt}(t)=\sum_{j\in\mathcal{L}(i)}\int_{t-\tau}^t\mu (t-s) \psi_{ij}(s)[u_j(s)-u_i(t)]\, ds, \;\;\; i=1,\dots,N,\; t>0,}\\
\displaystyle{
u_i(s)=u_i^0(s), \;\;\; i=1,\dots, N,\;s\in[-\tau ,0],}
\end{array}
\end{equation}
where $u_i^0(\cdot),$ $i=1,\dots, N,$ are continuous functions.
If $u_i^0(s)\geq0$ for all $i=1, \dots, N$, and all $s\in[-\tau ,0]$, then $u_i(t)\geq0$ for all $i$ and $t>0$.
\end{Proposition}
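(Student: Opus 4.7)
The plan is to induct on the agent index $i$ using the hierarchical leadership ordering, combined with a standard integrating-factor argument. First, I rewrite (\ref{positivitywithdelay}) in the form
$$\frac{du_i}{dt}(t) + A_i(t)\, u_i(t) = F_i(t),$$
with
$$A_i(t) := \sum_{j\in\mathcal{L}(i)}\int_{t-\tau}^t \mu(t-s)\psi_{ij}(s)\,ds, \qquad F_i(t) := \sum_{j\in\mathcal{L}(i)}\int_{t-\tau}^t \mu(t-s)\psi_{ij}(s)\,u_j(s)\,ds.$$
By the assumptions on $\mu$ and $\psi$, the coefficient $A_i(\cdot)$ is a continuous, nonnegative function of $t$; and once the relevant $u_j$'s are known to be nonnegative, so is $F_i(\cdot)$.

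For the base case $i=1$, the HL-flock hypothesis forces $\mathcal{L}(1)=\emptyset$, so the right-hand side of (\ref{positivitywithdelay}) vanishes identically and $u_1(t)\equiv u_1^0(0)\geq 0$ for every $t\geq 0$. For the inductive step, I assume that $u_j(t)\geq 0$ for every $t\geq -\tau$ and every $j<i$. Since by the HL ordering every $j\in\mathcal{L}(i)$ satisfies $j<i$, the integrand defining $F_i(t)$ is pointwise nonnegative on $[t-\tau,t]$ for all $t\geq 0$, whence $F_i(t)\geq 0$. The variation-of-parameters formula then yields
$$u_i(t) = u_i^0(0)\exp\!\Bigl(-\int_0^t A_i(r)\,dr\Bigr) + \int_0^t \exp\!\Bigl(-\int_s^t A_i(r)\,dr\Bigr) F_i(s)\,ds \geq 0,$$
closing the induction. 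Note that for $s\in[0,\tau]$ the history $u_j(s-\tau),\dots,u_j(s)$ includes the initial datum on $[-\tau,0]$, which is nonnegative by assumption, so there is no issue at the initial boundary layer.

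The argument is genuinely short; the only potential obstacle relative to the undelayed setting of Shen is that $F_i(t)$ depends on the entire trajectory of the lower-ranked $u_j$'s over a window of length $\tau$, so one cannot reason purely pointwise. The inductive order built into the HL structure circumvents this cleanly, since we obtain nonnegativity of $u_j$ on the whole half-line $[-\tau,+\infty)$ before we attempt to control $u_i$. An equivalent contradiction argument would fix the first time $t_\star>0$ and the smallest index $i$ at which some $u_i$ touches zero from above; at $t_\star$ one has $u_i(t_\star)=0$, $u_i'(t_\star)\leq 0$, and (\ref{positivitywithdelay}) reduces to $u_i'(t_\star)=F_i(t_\star)\geq 0$, producing the desired contradiction. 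Either route establishes the proposition.
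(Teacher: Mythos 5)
Your proof is correct, and it takes a genuinely different route from the paper's. Both arguments share the same skeleton --- induction along the hierarchical ordering, using that $\mathcal{L}(i)\subseteq\{1,\dots,i-1\}$ so that the lower-ranked agents are handled first and the delayed terms $u_j(s)$ are already known to be nonnegative on all of $[-\tau,+\infty)$ --- but they diverge in how positivity of $u_i$ itself is extracted. The paper argues by contradiction: it supposes $u_k(\bar t)<0$, takes $t^*=\inf\{t>0: u_k<0 \text{ on } (t,\bar t)\}$, and shows that on $[t^*,\bar t)$ the right-hand side of the equation is nonnegative (since $u_j(s)\geq 0$ and $u_k(t)\leq 0$ there), so $u_k$ is nondecreasing from $u_k(t^*)=0$, contradicting negativity. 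You instead exploit the key structural fact --- which the paper uses only implicitly --- that $u_i$ enters its own equation only at the current time $t$, so that given the leaders' trajectories the equation is a genuine linear scalar ODE $u_i'+A_iu_i=F_i$ with $A_i\geq 0$ and $F_i\geq 0$, and the Duhamel representation makes nonnegativity manifest. Your route is more direct and yields slightly more (an explicit lower bound $u_i(t)\geq u_i^0(0)\exp(-\int_0^tA_i)$); the paper's route avoids writing the solution formula and is the style that generalizes more readily when the unknown also appears under the integral. One small caveat: the alternative contradiction sketch at the end of your write-up is not quite complete as stated, since $u_i'(t_\star)\leq 0$ together with $u_i'(t_\star)=F_i(t_\star)-A_i(t_\star)u_i(t_\star)=F_i(t_\star)\geq 0$ only gives $u_i'(t_\star)=0$, which is not by itself contradictory; to close that version one must, as the paper does, show $u_i'\geq 0$ on the whole interval where $u_i\leq 0$ and conclude that $u_i$ cannot descend below zero. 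Since your main argument does not rely on that aside, the proof stands.
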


\begin{proof}
Observe that if an agent $j$ is in the leader set $[\mathcal{L}](i)$ of the agent $i,$ then it is not influenced by agents outside of $[\mathcal{L}](i)$. Thus, it is sufficient to prove the statement for the system (\ref{positivitywithdelay}) restricted to the agents in $[\mathcal{L}](i)$, for each $i=1, \dots, N$.

We then  proceed by induction. Consider the first agent, i.e. agent 1. By definition of an HL-flock, $\mathcal{L}(1)=\emptyset$, which gives

\begin{equation}
	\frac{d u_1}{dt}=0 \ \mbox{\rm and so} \quad u_1(t)=u_1(0)=u_1^0(0)\geq 0,\quad \forall\  t\geq0.
	\label{u1}
\end{equation}
Using (\ref{u1}), the equation for the agent 2 becomes

$$\frac{d u_2}{dt}(t)=\int_{t-\tau}^t \mu (t-s) \psi_{21}(s)[u_1(s)-u_2(t)] ds =(u_1(0)-u_2(t))
\int_{t-\tau}^t \mu (t-s) \psi_{21}(s) ds\,
.$$
Arguing by contradiction, we assume that $u_2(\bar{t})<0$ for some $\bar{t}>0.$ Then, let
us denote

$$t^*=\inf\{t>0\;|\;u_2(s)<0\ \mbox{\rm for}\ s\in (t,\bar {t})\,\}.$$
Hence, by definition of $t^*$, $u_2(t^*)=0$ and
$u_2(s)<0$ for $s\in (t^*,\bar{t}).$
So, using again (\ref{u1}),

$$\frac{d u_2}{dt}(t)=({u_1(0)}-{u_2(t)})\int_{t-\tau }^t \mu (t-s) \psi_{21} (s)ds\geq 0,\quad t\in [t^*, \bar {t}),$$
which is in contradiction with  $u_2(t)<0$ for $t\in (t^*, \bar t)$ and $u_2(t^*)=0.$ This ensures  that $u_2(t)\geq 0$ for all $t\ge 0$.

Now, as the induction hypothesis, assume that $u_i(t)\geq 0$ for all $t>0$ and for all $i\in\{1,\dots, k-1\}$.

The equation for agent $k$ is

$$\frac{d u_k}{dt}(t)=\sum_{j\in\mathcal{L}(k)}\int_{t-\tau}^t \mu(t-s) \psi_{kj}(s)[u_j(s)-u_k(t)] ds\,,\ \ t>0.$$

As in the first step, let us assume by contradiction that $u_k(\bar{t})<0$ for some $\bar{t}>0$ and let
us denote

$$t^*=\inf\{t>0\;|\; u_k(s)<0\ \mbox{\rm for}\ s\in (t,\bar {t})\,\}.$$

Then,  $u_k(t^*)=0$ and $u_k(s)<0$ for $s\in (t^*, \bar{t}).$ We can use the induction hypothesis on the agents $j\in\mathcal{L}(k)\subseteq \{1, \dots, k-1\}$, so

$$\frac{d u_k}{dt}(t)=\sum_{j\in\mathcal{L}(k)}\int_{t-\tau}^t \mu (t-s) \psi_{kj}(s)[{u_j(s)}-{u_k(t)}] ds \geq 0,\quad t\in [t^*, \bar t)\,,$$
which gives a contradiction.

\noindent Therefore, we have proved that $u_i(t)\geq 0$ for all $i\in\{1, \dots, N\}.$
 \end{proof}
As in the undelayed case (see Th. 4.2 of \cite{Shen}) we can now deduce from the previous proposition the boundedness result for the velocities.

\begin{Proposition}\label{boundedness}
Let $\Omega$ be a convex and compact domain in $\RR^d$ and  let $({x_i},{v_i})$ be a solution of system $(\ref{CSShendelay})$. If ${v_i}(s)\in\Omega$ for all $i=1,\dots,N$ and $s\in[-\tau ,0]$, then ${v_i}(t)\in\Omega$ for all $i=1,\dots,N$ and $t>0$.
In particular, if $\Omega$ is the ball with center $0$ and radius
\begin{equation}\label{costanteD0}
D_0=\max_{1\leq i\leq N}\max_{s\in[-\tau ,0]}|{v_i}(s)|,
\end{equation}
 then $\vert {v_i}(t)|\leq D_0$ for all $t>0$ and $i=1,\dots, N$.
\end{Proposition}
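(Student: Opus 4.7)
The plan is to reduce the claim to Proposition \ref{positivity} through a standard support-function argument, exploiting the fact that any convex compact set in $\RR^d$ is the intersection of the closed half-spaces containing it.

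First I would fix an arbitrary closed half-space $H=\{y\in\RR^d:\langle y,n\rangle\leq c\}$ with $\Omega\subset H$, and define the scalar quantity $u_i(t):=c-\langle v_i(t),n\rangle$ for $i=1,\dots,N$. By hypothesis $v_i(s)\in\Omega\subset H$ for $s\in[-\tau,0]$, so $u_i(s)\geq 0$ on $[-\tau,0]$. The key computation is to check that $(u_i)_{i=1}^N$ solves exactly the scalar delay system (\ref{positivitywithdelay}) with the same weights $\psi_{ij}$ and $\mu$ that appear in (\ref{CSShendelay}). Indeed, taking the inner product of the second equation of (\ref{CSShendelay}) with $-n$ and adding and subtracting $c$ inside the bracket gives
\begin{equation*}
\frac{du_i}{dt}(t)=\sum_{j\in\mathcal{L}(i)}\int_{t-\tau}^{t}\mu(t-s)\psi_{ij}(s)[u_j(s)-u_i(t)]\,ds,
\end{equation*}
which is precisely (\ref{positivitywithdelay}). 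Note that the coefficients $\psi_{ij}(s)=\psi(|x_i(s)-x_j(s)|)$ are simply evaluated along the same solution and no sign condition on them is needed beyond nonnegativity, which holds by assumption on $\psi$ and $\mu$.

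Once this reformulation is in place, Proposition \ref{positivity} applies and yields $u_i(t)\geq 0$ for every $i$ and every $t>0$, i.e.\ $\langle v_i(t),n\rangle\leq c$. Since $H$ was an arbitrary closed half-space containing $\Omega$ and every closed convex set is the intersection of such half-spaces, we conclude $v_i(t)\in\Omega$ for all $t>0$ and all $i$.

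The second assertion follows at once by specializing $\Omega$ to the closed Euclidean ball centered at the origin with radius $D_0$ defined in (\ref{costanteD0}): this $\Omega$ is convex, compact and contains every $v_i(s)$ for $s\in[-\tau,0]$, so the general statement yields $|v_i(t)|\leq D_0$ for all $t>0$. The only nontrivial step is the verification that $u_i$ satisfies (\ref{positivitywithdelay}), which is just an algebraic manipulation; I do not foresee any real obstacle, the delayed and integral structure being preserved by the linear operation $v_i\mapsto c-\langle v_i,n\rangle$.
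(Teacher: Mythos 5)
Your proof is correct and is exactly the argument the paper intends: the paper omits the details, saying only that the result is deduced from Proposition \ref{positivity} as in Theorem 4.2 of Shen, and your half-space/support-function reduction (writing $\Omega$ as an intersection of half-spaces, setting $u_i=c-\langle v_i,n\rangle$, checking that the $u_i$ satisfy (\ref{positivitywithdelay}), and invoking the positivity result) is precisely that standard deduction. The only cosmetic remark is that the constant $c$ cancels automatically in the bracket $v_j(s)-v_i(t)$, so no adding and subtracting is even needed.
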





\section{Convergence to consensus} \label{sect3}
\setcounter{equation}{0}

Here we will prove
the announced flocking result for the CS-model under hierarchical leadership with distributed delay (\ref{CSShendelay}).
Our proof extends to the model at hand the one in \cite{PRV}, with pointwise delay.
We need a preliminary lemma.
\begin{Lemma}\label{usoLyap}
Let $(x, v)$ be a trajectory in the phase--space, namely $\frac{dx}{dt}(t)=v(t)$ for $t\ge 0\,.$
Assume that
\begin{equation}\label{PP1}
\frac {d\vert v\vert}{dt}(t)\le -d_0\psi (\vert x(t)\vert + M) \vert v(t)\vert + ce^{-bt}\quad \forall \ t \ge t_0,
\end{equation}
for some nonnegative constants $M, c, t_0$ and $b, d_0>0,$ where $\psi: [0,+\infty)\rightarrow (0,+\infty)$ is a continuous function satisfying $(\ref{divergent}).$
Then, there exists a suitable positive constant $C$ such that
$$\vert x(t)\vert \le C, \quad t\ge 0\,.$$
\end{Lemma}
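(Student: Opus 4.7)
The natural approach is a Lyapunov functional argument that trades the decay of $|v|$ against the growth of an antiderivative of $\psi$.

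First I would introduce
\[
\Psi(r) := \int_0^r \psi(s+M)\, ds, \qquad r \ge 0.
\]
Since $\psi$ is continuous and strictly positive, $\Psi$ is $C^1$ and strictly increasing. Moreover, the divergence assumption (\ref{divergent}) gives
\[
\Psi(r) = \int_M^{r+M}\psi(u)\, du \longrightarrow +\infty \quad \text{as } r\to +\infty,
\]
so $\Psi$ is a bijection from $[0,+\infty)$ onto $[0,+\infty)$ whose inverse $\Psi^{-1}$ is continuous. This is the ingredient that will convert a bound on $\Psi(|x(t)|)$ into a bound on $|x(t)|$.

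Next I would define the Lyapunov-type functional
\[
L(t) := |v(t)| + d_0\, \Psi(|x(t)|), \qquad t \ge 0.
\]
Using $\dot x = v$ and the elementary inequality $\frac{d}{dt}|x(t)| \le |v(t)|$ (valid in the Dini sense wherever $|x|$ is not differentiable), together with the nonnegativity of $\psi$, I get
\[
\frac{d}{dt}\Psi(|x(t)|) = \psi(|x(t)|+M)\,\frac{d}{dt}|x(t)| \le \psi(|x(t)|+M)\,|v(t)|.
\]
Combining this with the hypothesis (\ref{PP1}) yields, for every $t \ge t_0$,
\[
\frac{dL}{dt}(t) \le -d_0\,\psi(|x(t)|+M)\,|v(t)| + c e^{-bt} + d_0\,\psi(|x(t)|+M)\,|v(t)| = c e^{-bt},
\]
so the dissipative and growth contributions cancel exactly, which is the whole point of the construction.

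Finally, integrating from $t_0$ to $t$ gives
\[
L(t) \le L(t_0) + \frac{c}{b}\bigl(e^{-bt_0}-e^{-bt}\bigr) \le L(t_0) + \frac{c}{b},
\]
whence $\Psi(|x(t)|) \le \tfrac{1}{d_0}\bigl(L(t_0)+c/b\bigr)$ for all $t \ge t_0$. Applying the continuous inverse $\Psi^{-1}$ bounds $|x(t)|$ uniformly on $[t_0,+\infty)$, and continuity of $x$ on the compact interval $[0,t_0]$ handles the remaining times; taking the larger of the two bounds produces the constant $C$. The only step that requires a bit of care is justifying $\tfrac{d}{dt}|x| \le |v|$ and the analogous differential inequality for $|v|$ at points where these quantities vanish, but this is standard using upper Dini derivatives and does not affect the structure of the argument.
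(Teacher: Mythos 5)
Your proof is correct and follows essentially the same route as the paper: both construct the Lyapunov functional $|v(t)|+d_0\,\phi(|x(t)|+M)$ with $\phi$ a primitive of $\psi$, show its derivative is bounded by $ce^{-bt}$ via $\bigl|\tfrac{d}{dt}|x|\bigr|\le |v|$, integrate, and invoke the divergence of $\int\psi$ to bound $|x(t)|$. The only (harmless) difference is that you use just the ``$+$'' functional, whereas the paper carries both signs $\mathcal{F}_\pm$ to state the intermediate estimate with an absolute value; your version is a slight streamlining of the same argument.
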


\begin{proof}
Let us consider the functionals (cfr. \cite{HL, PRV})
\begin{equation}\label{PP2}
{\mathcal {F}}_\pm(t)= \vert v(t)\vert \pm d_0\phi (\vert x(t)\vert + M),
\end{equation}
where $\phi $ is a primitive of $\psi,$ namely $\phi^\prime (s)=\psi (s),$ $s\in (0,+\infty)\,.$

From (\ref{PP1}) we deduce
 \begin{equation}\label{PP3}
\begin{array}{l}
\displaystyle{
\frac{d\mathcal{F}_{\pm}}{dt}(t)=
\frac{d|{v}|}{dt}(t)\pm d_0\psi(|{x(t)}|+M )\frac{d|{x}|}{dt}(t)
}\\
\hspace{1 cm}
\displaystyle{
\leq -d_0\psi(|{x}(t)|+M)|{v(t)}|\pm d_0\psi(|{x(t)}|+M)\frac{d|{x}|}{dt}(t)+ce^{-bt}}\\
\hspace{1 cm}\displaystyle{
=d_0\psi(|{x(t)}|+M)\left(
\pm\frac{d|{x}|}{dt}(t) -|{v(t)}| \right )+ce^{-bt}\leq ce^{-bt}\,,\quad t\ge t_0\,,}
\end{array}
\end{equation}
where we have used
\begin{equation}\label{PP4}
\left\vert \frac{d \vert x(t)\vert }{dt}\right\vert \le \vert v(t)\vert\,.
\end{equation}
Now, integrating (\ref{PP3}) on the time interval $[t_0,t],$ we obtain
$$\mathcal{F}_{\pm}(t)-\mathcal{F}_{\pm}(t_0 )\leq c\int_{t_0} ^te^{-bs}\;ds=\frac{c}{b}(e^{-bt_0}-e^{-bt})\leq \frac{c}{b},$$
which implies

$$|{v}(t)|-|{v}(t_0 )|\leq
\pm d_0\left( \phi\left(\vert x (t_0)\vert + {M}\right)-\phi\left(|{x}(t)|+{M}\right)\right)+\frac{c}{b},$$
namely
\begin{equation}\label{PP5}
 |{v}(t)|-|{v}(t_0 )|\leq -d_0\left|\int_{|{x}(t_0 )|+{M}}^{|{x}(t)|+{M}}\psi(s)\;ds\right| +\frac{c}{b}.
\end{equation}
In particular, from (\ref{PP5}), we deduce

\begin{equation}\label{PP6}
	|{v}(t_0 )|+\frac c b\geq d_0\left|\int_{|{x}(t_0)|+{M}}^{|{x}(t)|+{M}}\psi(s)\;ds\right|.
\end{equation}

Then, assumption (\ref{divergent}) ensures  the existence of a constant  $x_M>0$ such that

$$|{v}(t_0)|+\frac{c}{b}=d_0\int_{|{x}(t_0)|+{M}}^{x_M}\psi(s)\;ds ,$$
which, together with (\ref{PP6}), implies

$$|{x}(t)|\leq C,\quad\forall\  t\ge 0\,,$$
being $\psi$ is a nonnegative function.
\end{proof}

\begin{Theorem}\label{flockingDD}
Let $(x_i, v_i),$ $i=1, \dots, N,$ be a solution of the Cucker-Smale system under hierarchical leadership with distributed delay $(\ref{CSShendelay})$ with initial conditions $(\ref{IC}).$ Assume
that the potential function $\psi$ satisfies $(\ref{divergent}).$
Then,
\begin{equation}\label{flockingestimate}
\vert v_i(t) -v_j(t)\vert =O(e^{-Bt}),\quad \forall \ i, j=1,\dots, N,
\end{equation}
for  a suitable constant $B>0$ depending only on the initial configuration and the parameters of the system.
\end{Theorem}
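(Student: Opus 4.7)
The plan is to induct on the agent index, exploiting the hierarchical structure. Since agent~$1$ has no leaders, (\ref{CSShendelay}) gives $v_1(t)\equiv v_1^0(0)$ constant, so the natural quantity to track is the deviation from the top leader: set $w_i(t):=v_i(t)-v_1$ and $y_i(t):=x_i(t)-x_1(t)$, noting $\dot y_i=w_i$. I would prove by induction on $i$ that there exist $C_i,B_i>0$ with $|w_i(t)|\le C_i e^{-B_i t}$ and $\sup_{t\ge 0}|y_i(t)|<\infty$; once both hold for all $i$, the triangle inequality gives (\ref{flockingestimate}). The base case $i=1$ is trivial, and the inductive step is where all the work lies.

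Fix $i>1$ and assume the claim for every $j<i$, hence in particular for every $j\in\mathcal{L}(i)$. Subtracting the constant $v_1$ inside the bracket of (\ref{CSShendelay}) yields
$$\frac{dw_i}{dt}(t)=\sum_{j\in\mathcal{L}(i)}\int_{t-\tau}^t \mu(t-s)\,\psi_{ij}(s)\,[w_j(s)-w_i(t)]\,ds,$$
and dotting with $w_i/|w_i|$ produces the scalar inequality
$$\frac{d|w_i|}{dt}(t)\le \sum_{j\in\mathcal{L}(i)}\int_{t-\tau}^t \mu(t-s)\,\psi_{ij}(s)\,\bigl[|w_j(s)|-|w_i(t)|\bigr]\,ds.$$
For the $|w_j(s)|$ contribution I would use the inductive decay of $|w_j|$ on $[0,t]$ combined with Proposition~\ref{boundedness} to handle the short initial window $s\in[-\tau,0]$, and the trivial upper bound $\psi_{ij}(s)\le\psi(0)$, producing an exponentially small forcing $ce^{-bt}$ for some $c,b>0$ (valid for $t\ge t_0$). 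To lower-bound the dissipative term I exploit that for $s\in[t-\tau,t]$ Proposition~\ref{boundedness} gives $|x_k(s)-x_k(t)|\le D_0\tau$ for every $k$, whence
$$|x_i(s)-x_j(s)|\le |y_i(t)|+|y_j(t)|+2D_0\tau\le |y_i(t)|+M,$$
where $M:=\max_{j\in\mathcal{L}(i)}\sup_t|y_j(t)|+2D_0\tau$ is finite by the inductive hypothesis. Monotonicity of $\psi$ and (\ref{boundmu}) then yield
$$\frac{d|w_i|}{dt}(t)\le -d_0\,\psi\bigl(|y_i(t)|+M\bigr)\,|w_i(t)|+ce^{-bt},\qquad d_0:=|\mathcal{L}(i)|\,\mu_0,$$
which is exactly the hypothesis of Lemma~\ref{usoLyap} applied to the planar trajectory $(y_i,w_i)$.

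Lemma~\ref{usoLyap} therefore delivers $\sup_t|y_i(t)|\le C$, which reinserted into the differential inequality makes $\psi(|y_i(t)|+M)\ge\psi(C+M)=:\delta>0$. Gr\"onwall's inequality applied to $\frac{d|w_i|}{dt}\le -d_0\delta|w_i|+ce^{-bt}$ then yields $|w_i(t)|=O(e^{-B_i t})$ for any $B_i<\min(d_0\delta,b)$, closing the induction. The main obstacle I anticipate is the self-referential nature of the induction: the dissipation rate depends on $|y_i(t)|$, which is only controlled a posteriori through Lemma~\ref{usoLyap}, which itself requires the very inequality being derived. The delicate part is separating cleanly the delayed positions $|x_i(s)-x_j(s)|$ so that only $|y_i(t)|$ remains on the right-hand side while the dependence on already-controlled leader agents is absorbed into the constant $M$ and into the forcing term.
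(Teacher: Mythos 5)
Your proof is correct, and while it shares the paper's overall architecture --- induction along the hierarchy, reduction to a scalar dissipative inequality with an exponentially small forcing term, Lemma \ref{usoLyap} to obtain a uniform bound on the relative positions, then Gronwall --- it uses a genuinely different decomposition. The paper measures each agent against the \emph{local average} of its direct leaders, $\hat v_l=\frac{1}{d_l}\sum_{j\in\mathcal{L}(l)}v_j$, and this choice forces it to carry a second, two-time induction hypothesis $|v_i(s)-v_j(t)|=O(e^{-bt})$ for $s\in[t-\tau,t]$ (see (\ref{inductionassumptiondelay2})), to estimate $d\hat v_l/dt$ and the cross terms as in (\ref{PP23})--(\ref{PP24}), and to close that extra hypothesis at the end of each inductive step via computations like (\ref{PP15})--(\ref{PP18}). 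You instead measure everything against the ultimate leader's velocity $v_1$, which is constant, so that $v_j(s)-v_i(t)=w_j(s)-w_i(t)$ exactly; the dissipation term is then clean, and the forcing $\sum_{j}\int_{t-\tau}^{t}\mu(t-s)\psi_{ij}(s)|w_j(s)|\,ds$ is controlled directly by the single-time inductive decay of $|w_j|$ shifted by at most $\tau$ (costing only a factor $e^{b\tau}$), so the two-time estimates become unnecessary. This is a real streamlining for the present theorem. What the paper's choice buys is reusability: in Section \ref{sect4} the first agent accelerates, $v_1$ is no longer constant, and your reference point would inject extra terms $v_1(s)-v_1(t)$ and $f(t)$ into every equation, whereas the $\hat v_l$-machinery carries over essentially verbatim. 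One minor technical point, common to both arguments: $|w_i|$ need not be differentiable where it vanishes, so your scalar inequality should be read for the upper Dini derivative (or derived from $\frac{d}{dt}|w_i|^2$), exactly as the paper implicitly does in passing to (\ref{PP10}).
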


\begin{proof}
We will use induction on the number of agents in the flock.
Consider first a flock of 2 agents $[1,2]$. Recall that, by definition of an HL-flock, $\mathcal{L}(2)\neq \emptyset$, i.e. $\psi_{21}>0$. Moreover, $\psi_{12}=0$. Then,
	
\begin{equation}\label{PPzero}
\frac{d{v_1}}{dt}=0 \quad \Rightarrow \quad {v_1}(t)= {v_1}(0),\quad\forall\ t>0,
\end{equation}
and
\begin{equation}\label{PP7}
\frac{d {v_2}}{dt}(t)=\int_{t-\tau}^t\mu (t-s)\psi_{21}(s)[{v_1}(s)-{v_2}(t)] ds=({v_1}(0)-{v_2}(t))\int_{t-\tau}^t\mu (t-s) \psi_{21}(s) ds,\quad t\ge\tau.
\end{equation}	
We now denote
\begin{equation}\label{PP8}
{y}_2(t)={x_2}(t)-{x_1}(t)\quad\mbox{\rm  and}\quad {w}_2(t)={v_2}(t)-{v_1}(t).
\end{equation}
Then, from (\ref{PP7}), we obtain

\begin{equation}\label{PP9}
\frac{d {w_2}}{dt}(t)=\frac{d {v_2}}{dt}(t)-{\frac{d {v_1}}{dt}}(t)=\int_{t-\tau}^t \mu (t-s) \psi_{21}(s)[v_1(s) -v_2(t)]ds,\quad t\ge\tau ,
\end{equation}
and thus, using also (\ref{PPzero}),
$$
\frac 12 \frac {d \vert w_2\vert^2}
{dt}(t)=-\vert w_2(t)\vert^2\int_{t-\tau}^t\mu (t-s) \psi_{21}(s) ds\,,
$$
which implies
\begin{equation}\label{PP10}
 \frac{d|{w_2}|}{dt}(t)\leq -\vert w_2(t)\vert \int_{t-\tau}^t\mu(t-s)\psi\left(\vert {x_2}(s)-
 {x_1}(s)\vert \right )ds\,, \quad t\ge\tau\,.
\end{equation}
Therefore, from (\ref{PP10}), we deduce that
 $|{w_2}(t)|$ is decreasing in time for $t\ge \tau\,.$
Now, observe that for $t>\tau $ and $s\in [t-\tau,t],$ we have
$$
\begin{array}{l}
\displaystyle{{x_1}(s)-{x_2}(s)={x_1}(t)-{x_2}(t)+\int_t^{s}{({x_1}-{x_2})^\prime }(\sigma)\;d\sigma}\\
\hspace{2 cm}\displaystyle{
={x_1}(t)-{x_2}(t)+\int_s^{t}{w_2}(\sigma )\;d\sigma,}
\end{array}
$$
which gives, recalling Lemma \ref{boundedness},

\begin{equation}\label{PP11}
|{x_1}(s)-{x_2}(s)|\leq |{x_1}(t)-{x_2}(t)|+2 D_0\tau=|{y_2}(t)|+2 D_0\tau\,,
\quad t\ge\tau\,,
\end{equation}
with $y_2(t), w_2(t)$ defined in (\ref{PP8}) and $D_0$ the bound on the initial velocities defined in (\ref{costanteD0}).

Using this inequality in (\ref{PP10}) and recalling that the potential function $\psi$ is not increasing, we obtain

\begin{equation}\label{PP12}	
	\frac{d|{w_2}|}{dt}(t)\leq -\vert w_2(t)\vert \int_{t-\tau}^t\mu (t-s)\psi(|{y_2}(t)|+2\tau D_0)ds= -\mu_0 \vert w_2(t)\vert \psi(|{y_2}(t)|+2\tau D_0)\,,\quad t\ge\tau\,,
\end{equation}
where $\mu_0$ is the positive constant in (\ref{boundmu}).
Then, the pair state-velocity $(y_2,w_2)$ satisfies the inequality (\ref{PP1}) with $t_0=\tau,$ $d=\mu_0,$
$M=2\tau D_0$ and $c=0\,.$ Therefore, we can apply Lemma \ref{usoLyap} obtaining
$\vert y_2(t)\vert \le C_2$ for some positive constant $C_2.$ So, for a suitable constant $y^2_M,$

\begin{equation}\label{PP13}
|{y_2}(t )|+2\tau D_0\leq y^2_M,
\quad t\ge\tau\,.
\end{equation}
Now, from (\ref{PP12}) and (\ref{PP13}) we deduce

$$\frac{d|{w_2(t)}|}{dt}\leq -\mu_0\psi(y^2_M)|{w_2}(t)|,\quad t\ge\tau\,,$$
and the Gronwall inequality implies
\begin{equation}\label{PP14}
|{w_2}(t)|\leq e^{-\mu_0\psi(y^2_M)(t-\tau ) }|{w_2}(\tau )|,\quad t\ge\tau\,.
\end{equation}
In order to complete our inductive step we will need also estimates on the distances $\vert v_i(s) -v_j(t)\vert$ and $\vert v_i(s) -v_j(s)\vert$ for $j=1,2$ and $s\in [t-\tau, t].$

Now, since ${v_1}(t)$ is constant for $t\ge\tau ,$ we easily deduce

\begin{equation}\label{PP15}
|{v_1}(s)-{v_2}(t)|=|{v_1}(t)-{v_2}(t)|=O(e^{-\psi(y^2_M)t}).
\end{equation}
Observe also that, for $s\in [t-\tau, t],$

\begin{equation}\label{PP16}
\begin{array}{l}
\displaystyle{
|{v_2}(s)-{v_2}(t)|= \left|\int_{s}^t {v_2}^\prime (\sigma)\;d\sigma \right|=\left|\int_{s}^t
\int_{\sigma -\tau}^{\sigma } \mu(\sigma-r)\psi_{21}(r)[{v_1}(r )-{v_2}(\sigma )] \;d r \;d\sigma \right|}\\
\hspace{1 cm}
\displaystyle{
\leq c\int_{s}^te^{-\psi(y^2_M)\sigma}\;d\sigma
\leq c\tau e^{-\psi(y^2_M)(t-\tau)}=c\tau e^{\psi(y^2_M)\tau}e^{-\psi(y^2_M)t}=O(e^{-\psi(y^2_M)t}).}
\end{array}
\end{equation}
Since

\begin{equation}\label{PP17}
|{v_2}(s)-{v_1}(t)|\leq |{v_2}(s)-{v_2}(t)|+|{v_2}(t)-{v_1}(t)|\,,
\end{equation}
from previous estimates we thus obtain
\begin{equation}\label{PP18}
|{v_2}(s)-{v_1}(t)|=O(e^{-\psi(y^2_M)t})\,,\quad t>\tau,\ s\in [t-\tau , t]\,.
\end{equation}
Moreover, of course,
$\vert v_1(s)-v_1(t)\vert = O(e^{-\psi(y^2_M)t}),$ being $v_1(t)$ constant for $t\ge\tau\,.$

We assume now, by induction, that analogous exponential estimates are satisfied
for a  flock of
 $l-1$ agents $[1, \dots, l-1]$ with $l>2$, i.e. there exists some constant $b >0$ such that, $\forall\ i, j\in\{1,\dots,l-1\},$

\begin{align}
	&|{v_i}(t)-{v_j}(t)|=O(e^{-bt}),\label{inductionassumptiondelay1}\\
	&|{v_i}(s)-{v_j}(t)|=O(e^{-bt}), \quad t>\tau, \ s\in [t-\tau , t].
	\label{inductionassumptiondelay2}
\end{align}

 Then, we want to prove that such  estimates hold true  also for a flock with $l>2$ agents $[1, \dots, l]$.
This will complete the proof.
For this aim, define the average position and velocity of the leaders of agent $l$,

\begin{equation}\label{PP19}
{\hat{x_l}}=\frac{1}{d_l}\sum_{i\in\mathcal{L}(l)}{x_i}(t)\;\;\; \mbox{and}\;\;\;{\hat{v_l}}=\frac{1}{d_l}\sum_{i\in\mathcal{L}(l)}{v_i}(t), \;\;\; d_l=\#\mathcal{L}(l).
\end{equation}
Also, define
\begin{equation}\label{PP20}
 {y_l(t)}={x_l(t)}-{\hat{x_l}(t)}\quad\mbox{\rm  and}\quad
{w_l(t)}={v_l(t)}-{\hat{v_l}(t)}.
\end{equation}
Then,

\begin{equation}\label{PP21}
\frac{d {w_l}}{dt}(t)=\frac{d {v_l}}{dt}(t)-\frac{d {\hat{v_l}}}{dt}(t)=\sum_{j\in\mathcal{L}(l)}\int_{t-\tau}^t\mu (t-s) \psi_{lj}(s)[{v_j}(s)-{v_l}(t)] ds -\frac{d {\hat{v_l}}}{dt}(t)\;.
\end{equation}

By adding and subtracting $\sum_{j\in\mathcal{L}(l)} \int_{t-\tau}^t\mu (t-s) \psi_{lj}(s)ds\; {\hat{v_l}(t)}$ in (\ref{PP21}) we get

\begin{equation}\label{PP22}	
\frac{d {w_l}}{dt}=-w_l(t)\sum_{j\in\mathcal{L}(l)}
\int_{t-\tau }^t \mu (t-s) \psi_{lj}(s)ds +
\sum_{j\in\mathcal{L}(l)}\int_{t-\tau}^t\mu (t-s) \psi_{lj}(s)[{v_j}(s)-{\hat {v_l}}(t)] ds
-\frac{d {\hat{v_l}}}{dt}.
\end{equation}
Using the induction hypothesis (\ref{inductionassumptiondelay2}), since $\mathcal{L}(i),\mathcal{L}(l)\subseteq [1, \dots, l-1]$,

\begin{equation}\label{PP23}
	\frac{d {\hat{v_l}}}{dt}=\frac{1}{d_l}\sum_{i\in\mathcal{L}(l)}\frac{d {v_i}}{dt}=\frac{1}{d_l}\sum_{i\in\mathcal{L}(l)}\sum_{j\in\mathcal{L}(i)}\int_{t-\tau}^t\mu (t-s)\psi_{ij}(s)[{v_j}(s)-{v_i}(t)]\;ds=O(e^{-bt}).
\end{equation}
Using again the induction hypothesis  (\ref{inductionassumptiondelay2}),

\begin{equation}\label{PP24}
\begin{array}{l}
\displaystyle{
	\sum_{j\in\mathcal{L}(l)}\int_{t-\tau}^t \mu (t-s) \psi_{lj}(s)[{v_j}(s)-{\hat{v_l}}(t)]\;ds}
\\
\displaystyle{\hspace{2 cm}
	= \frac 1 {d_l}\sum_{j\in\mathcal{L}(l)}\int_{t-\tau}^t\mu (t-s)\psi_{lj}(s)
	\Big (\sum_{i\in\mathcal{L}(l)} [v_j(s) -v_i(t)]\Big )\;ds
	=O(e^{-bt}).}
\end{array}
\end{equation}
So, identity (\ref{PP22}) can be rewritten as

\begin{equation} \label{PP25}
	\frac{d {w_l}}{dt}(t)=- w_l(t)\sum_{j\in\mathcal{L}(l)}\int_{t-\tau}^t\mu (t-s)\psi_{lj}(s)\;ds+ O(e^{-bt}),
\quad t\ge\tau\,.	
\end{equation}
with
$$\psi_{lj}(s)=\psi(|{x_l}(s)-{x_j}(s)|).$$
Observe that for every $j\in\mathcal{L}(l)$ it results
\begin{equation}\label{PP26}
\begin{array}{l}
\displaystyle{
|{x_l}(s)-{x_j}(s)|\le |{x_l}(s)-{\hat{x}_l}(s)\vert +|{x_j}(s)-{\hat{x}_l}(s)|}
\\
\hspace{2.6 cm}\displaystyle{
\leq |{y_l}(s)|+M_l,}
\end{array}
\end{equation}
for some positive $M_l$, due to the induction's assumption. Then, (\ref{PP25}) gives

\begin{equation}\label{PP27}
	\frac{d|{w_l}|}{dt}(t)\leq -d_l \vert w_l(t)\vert \;\int_{t-\tau}^t \mu (t-s) \psi\left(|{y_l}(s)|+M_l\right)\; ds+ce^{-bt},\quad t\ge\tau\,.
\end{equation}
Now, note that from Proposition \ref{boundedness}, $|{v_i}(t)|\leq D_0$ for all $i$ and for all $t>0$, which implies

$$|{w_l}(t)|\leq \frac{1}{d_l}\sum_{j\in\mathcal{L}(l)}|{v_j}(t)-{v_l}(t)|\leq \frac{1}{d_l}\sum_{j\in\mathcal{L}(l)}2D_0=2D_0.$$
Then,
\begin{equation}\label{PP28}
|{y_l}(s)|\leq |{y_l}(t)|+2\tau D_0,\quad t\ge \tau\,, \ s\in [t-\tau, t]\;,
\end{equation}
which used in (\ref{PP27}), recalling that $\psi$ in not increasing, yields

\begin{equation}\label{PP29}
	\frac{d|{w_l}|}{dt}(t)\leq -d_l \mu_0 \psi\left(|{y_l}(t)|+2\tau D_0+M_l\right)|{w_l}(t)|+ce^{-bt}.
\end{equation}

We can then apply Lemma \ref{usoLyap} to the pair state-velocity $(y_l, w_l)$ to conclude that
$\vert y_l (t)\vert \le C_l$ for some positive constant $C_l.$ So, for a suitable constant $y_M^l,$
$$\vert y_l(t)\vert +2\tau D_0+M_l\le y_M^l,\quad t\ge\tau\,.$$
Using the above estimate in (\ref{PP28}) we then obtain
$$\frac{d|{w_l}|}{dt}\leq -d_l\mu_0\psi(y^l_M)|{w_l}(t)|+ce^{-bt},$$
and therefore, from the
Gronwall's inequality we deduce,
\begin{equation}\label{Q1}
\vert w_l(t)\vert \le Ce^{-B^lt}\,,
\end{equation}
for suitable positive constants $C, B^l.$

Thus, from (\ref{Q1}) and the induction hypothesis (\ref{inductionassumptiondelay1}), for every $j\in {\mathcal L}(l),$ we have

\begin{equation}
	|{v_l}(t)-{v_j}(t)|\le \vert v_l(t)-\hat v_l(t)\vert +
	\vert \hat v_l(t) -v_j(t)\vert =O(e^{-Bt}).
	\label{Q2}
\end{equation}

Now, to complete the induction argument, we only have  to prove that, for all $t>0$ and $i,j\in\{1, \dots,l\}$,

\begin{equation}
|{v_i}(s)-{v_j}(t)|=O(e^{-Bt}),
\label{Q3}
\end{equation}
for a suitable positive constant $B.$

If $i,j\in\{1, \dots, l-1\}$, then (\ref{Q3}) is true by (\ref{inductionassumptiondelay2}). Let us consider the case $i\in\{1, \dots, l-1\}$ and $j=l$. Then,

$$
|{v_i}(s)-{v_l}(t)|
\leq
|{v_i}(s)-{v_i}(t)|+\vert {v_i}(t)-{v_l}(t)|=O(e^{-Bt}),$$
by (\ref{inductionassumptiondelay2}) and (\ref{Q2}), for suitable $B.$

\noindent Consider now $i=j=l$. Then, using previous estimates we see that

\begin{equation}
\label{Q4}
\begin{array}{l}
\displaystyle{
|{v_l}(s)-{v_l}(t)|=\left|\int_{s}^t {v_l}^\prime (\sigma )\;d\sigma\right|=\left|\int_{s}^t \sum_{k\in\mathcal{L}(l)}\int_{\sigma -\tau }^{\sigma} \mu (\sigma -r) \psi_{lj}(r)\left({{v_k}(r)-{v_l}(\sigma )}\right)\;d r\; d\sigma\right|}\\
\hspace{2 cm}\displaystyle{
\leq \bar{c}\int_{s}^te^{-B\sigma }\;d\sigma \le \bar{c}\tau e^{-B(t-\tau)}=\bar{c}\tau e^{B\tau}e^{-B t}=O(e^{-Bt}).}
\end{array}
\end{equation}
Also for the last case, where $j\in\{1, \dots, l-1\}$ and $i=l$, using (\ref{Q4}) we have

$$|{v_l}(s)-{v_j}(t)|\leq |{v_l}(s)-{v_l}(t)|+|{v_l}(t)-{v_j}(t)|=O(e^{-Bt}),$$
by the previous case and (\ref{Q2}). Then, we have proved that  (\ref{Q3}) is satisfied for all $i,j\in \{1,\dots,l\}$ and this concludes the proof of the theorem.
\end{proof}


\section{The case of free-will leader} \label{sect4}
\setcounter{equation}{0}

It may happen that the leader of the flock, instead of moving at a constant velocity, takes off or changes its rate in order to avoid a danger,  for instance due to the presence of predator species. Thus, it is important to consider this situation in the mathematical model.

The Cucker-Smale model with a free-will leader is, then,

\begin{equation}\label{agent1}
\begin{array}{l}
\displaystyle{\frac{d {x_1}}{dt}(t)={v_1}(t),}\\
\displaystyle{
\frac{d {v_1}}{dt}(t)=f(t),}
\end{array}
\end{equation}
where $f:[0,+\infty)\rightarrow \RR^d$ is a continuous integrable function, that is,
\begin{equation}\label{acceleration}
\Vert f\Vert_1=\int_0^{+\infty} \vert f(t)\vert \, dt <+\infty\,,
\end{equation}
for the motion of the free-will leader, and the Cucker-Smale model under hierarchical leadership and distributed delay, as in the previous sections, for the other agents,
 namely
\begin{equation}\label{CSShendelayFW}
\begin{array}{l}
\displaystyle{\frac{d {x_i}}{dt}(t)={v_i}(t),}\\
\displaystyle{
\frac{d {v_i}}{dt}(t)=\sum_{j\in\mathcal{L}(i)}\int_{t-\tau}^t\mu (t-s)\psi_{ij}(s )[{v_j}(s)-{v_i}(t)]\, ds,}
\end{array}
\end{equation}
for all $i\in\{2, \dots, N\}.$
The initial data are assigned, as usual, on the time interval $[-\tau , 0],$ i.e.
\begin{equation}\label{ICFW}
\begin{array}{l}
{x_i}(s)=x_i^0(s),\\
{v_i}(s)=v_i^0(s),
\end{array}
\end{equation}
for some continuous functions $x_i^0$ and $v_i^0,$ for $i=1,\dots ,N.$

The flocking result below
 extends the one proved by Shen \cite{Shen} for the undelayed case. The case with pointwise delay  has been studied in \cite{PRV}. Here, we consider a more general acceleration function with respect to \cite{Shen, PRV}, for the free-will leader. Indeed we assume
\begin{equation}\label{Q1}
\vert f(t)\vert =o((1+t)^{1-N }) \quad \mbox{\rm and}\quad
 t^{N-2} \vert f(t) \vert \in L^1(0,+\infty)
\end{equation}
instead of
\begin{equation}\label{suf}
\vert f(t)\vert =O((1+t)^{-\mu }),
\quad \mu >N-1\,.
\end{equation}
Then, for instance, $f $ can be in the form
$$f(t)=\frac C {(1+t)^{\mu} },\ \mu >N-1,$$ as in \cite{Shen, PRV}, but also
$$f(t) = \frac C {(1+t)^{N-1}\,\ln^2 (2+t)}\,.$$
Note that, from (\ref{Q1}) it results
\begin{equation}\label{weneed}
t^k\vert f(t)\vert = o( (1+t)^{1-N+k}),\quad \forall \ k=1,\dots, N-1\,.
\end{equation}
In order to prove our flocking result, we will need the following lemma, which is a generalization of
Lemma \ref{usoLyap} above.

\begin{Lemma}\label{usoLyapG}
Let $(x, v)$ be a trajectory in the phase--space, namely $\frac{dx}{dt}(t)=v(t)$ for $t\ge 0\,.$
Assume that
\begin{equation}\label{XX1}
\frac {d\vert v\vert}{dt}(t)\le -d_0\psi (\vert x(t)\vert + M) \vert v(t)\vert + g(t)\quad \forall \ t \ge t_0,
\end{equation}
for some nonnegative constants $M, t_0,$ a constant $d_0>0$ and a continuous and integrable function  $g:[t_0,+\infty)\rightarrow (0,+\infty),$ where $\psi: [0,+\infty)\rightarrow (0,+\infty)$ is a continuous function satisfying $(\ref{divergent}).$
Then, there exists a suitable positive constant $C$ such that
$$\vert x(t)\vert \le C, \quad t\ge 0\,.$$
\end{Lemma}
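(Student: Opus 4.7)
The argument is a direct generalization of the proof of Lemma~\ref{usoLyap}: the only feature of the source term $ce^{-bt}$ used there was its finite $L^1$ norm on $[t_0,+\infty)$, and this is precisely what is guaranteed by the integrability of $g$. So the same Lyapunov functionals
$$\mathcal{F}_\pm(t) = |v(t)| \pm d_0\, \phi(|x(t)| + M),$$
with $\phi$ a primitive of $\psi$, should do the job.

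First I would differentiate $\mathcal{F}_\pm$, insert the assumption (\ref{XX1}), and use $\bigl|\tfrac{d|x|}{dt}(t)\bigr|\le |v(t)|$ to obtain, exactly as in (\ref{PP3}),
$$\frac{d\mathcal{F}_\pm}{dt}(t) \le g(t), \qquad t\ge t_0.$$
Integrating on $[t_0,t]$ and using $\int_{t_0}^t g(s)\,ds \le \|g\|_{L^1(t_0,+\infty)} =: G < +\infty$, I would deduce
$$|v(t)| - |v(t_0)| \le -d_0\left|\int_{|x(t_0)|+M}^{|x(t)|+M} \psi(s)\,ds\right| + G,$$
which in particular gives
$$|v(t_0)| + G \ge d_0 \left|\int_{|x(t_0)|+M}^{|x(t)|+M} \psi(s)\,ds\right|.$$

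Finally I would invoke the divergence condition (\ref{divergent}) on $\psi$ to select $x_M>0$ satisfying
$$|v(t_0)| + G = d_0\int_{|x(t_0)|+M}^{x_M} \psi(s)\,ds,$$
which, combined with the nonnegativity of $\psi$, forces $|x(t)|+M \le x_M$ for every $t\ge t_0$. Boundedness on the compact interval $[0,t_0]$ is automatic by continuity of the trajectory, so $|x(t)|\le C$ globally for a suitable constant $C$, as claimed.

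There is no real obstacle here; the only point worth flagging is that the previous proof relied exclusively on the total $L^1$ bound of the perturbation, not on its pointwise exponential decay, so replacing $ce^{-bt}$ by an arbitrary integrable $g(t)$ requires no modification beyond substituting $c/b$ with $\|g\|_{L^1(t_0,+\infty)}$ in the integrated inequality.
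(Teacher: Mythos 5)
Your proposal is correct and follows essentially the same route as the paper's own proof: the same functionals $\mathcal{F}_\pm$, the same differentiation and integration on $[t_0,t]$ with $c/b$ replaced by $\Vert g\Vert_{L^1(t_0,+\infty)}$, and the same use of the divergence condition (\ref{divergent}) to produce $x_M$. Your remark that only the $L^1$ bound of the perturbation matters is exactly the point of the generalization, and your explicit note on boundedness over $[0,t_0]$ by continuity is a detail the paper leaves implicit.
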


\begin{proof}
Let us consider the functionals ${\mathcal {F}}_\pm$ introduced in (\ref{PP2}) with $d_0, M, \psi$ as in the statement.
From (\ref{XX1}) we deduce
 \begin{equation}\label{XX3}
\begin{array}{l}
\displaystyle{
\frac{d\mathcal{F}_{\pm}}{dt}(t)=
\frac{d|{v}|}{dt}(t)\pm d_0\psi(|{x(t)}|+M )\frac{d|{x}|}{dt}(t)
}\\
\hspace{1 cm}
\displaystyle{
\leq -d_0\psi(|{x}(t)|+M)|{v(t)}|\pm d_0\psi(|{x(t)}|+M)\frac{d|{x}|}{dt}(t)+g(t)}\\
\hspace{1 cm}\displaystyle{
=d_0\psi(|{x(t)}|+M)\left(
\pm\frac{d|{x}|}{dt}(t) -|{v(t)}| \right )+g(t)\leq g(t)\,,\quad t\ge t_0\,,}
\end{array}
\end{equation}
where we have used inequality (\ref{PP4}).

Now, we integrate (\ref{XX3}) on the time interval $[t_0,t],$ obtaining
$$\mathcal{F}_{\pm}(t)-\mathcal{F}_{\pm}(t_0 )\leq \Vert g\Vert_{L^1(t_0,+\infty)},$$
which gives

$$|{v}(t)|\leq
\pm d_0\left( \phi\left(\vert x (t_0)\vert + {M}\right)-\phi\left(|{x}(t)|+{M}\right)\right)+|{v}(t_0 )|+\Vert g\Vert_{L^1(t_0,+\infty)},$$
namely
\begin{equation}\label{XX5}
 |{v}(t)|\leq -d_0\left|\int_{|{x}(t_0 )|+{M}}^{|{x}(t)|+{M}}\psi(s)\;ds\right| +|{v}(t_0 )|+\Vert g\Vert_{L^1(t_0,+\infty)}\,.
\end{equation}
Therefore, from (\ref{XX5}), we have

\begin{equation}\label{XX6}
	|{v}(t_0 )|+\Vert g\Vert_{L^1(t_0,+\infty)}\geq d_0\left|\int_{|{x}(t_0)|+{M}}^{|{x}(t)|+{M}}\psi(s)\;ds\right|.
\end{equation}

The assumption (\ref{divergent}) ensures then the existence of a constant  $x_M>0$ such that

$$|{v}(t_0)|+\Vert g\Vert_{L^1(t_0,+\infty)}=d_0\int_{|{x}(t_0)|+{M}}^{x_M}\psi(s)\;ds ,$$
which, together with (\ref{XX6}), implies
$|{x}(t)|\leq C,\quad\forall\  t\ge 0\,.$
\end{proof}

\begin{Theorem}\label{FlockFWDistributed}
Let $(x_i, v_i),$ $i=1, \dots, N,$ be a solution of the Cucker-Smale system under hierarchical leadership with delay  $(\ref{agent1})$--$(\ref{CSShendelayFW})$ with initial conditions
$(\ref{ICFW}).$
Assume that $(\ref{divergent})$
is satisfied and that the acceleration of the free--will leader satisfies $(\ref{Q1}).$
Then, it results
\begin{equation}\label{Q2}
\vert v_i(t)-v_j(t)\vert \rightarrow 0,\quad \mbox{\rm for }\ \ t\rightarrow +\infty\,,\quad \forall\ i,j=1,\dots,N\,.
\end{equation}
\end{Theorem}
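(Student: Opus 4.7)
My plan is to mimic the inductive proof of Theorem \ref{flockingDD} on the number of agents $l$, with two modifications. First, since the leader is no longer stationary, the perturbations generated at each level of the induction are controlled by $|f|$ rather than being exponentially small, so Lemma \ref{usoLyap} must be replaced by Lemma \ref{usoLyapG}. Second, these perturbations have to be tracked through the $N-1$ layers of the induction, which is precisely what assumption (\ref{Q1}) (together with its consequence (\ref{weneed})) is designed to accommodate.

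For $l=1$, the integrability of $f$ implied by (\ref{Q1}) gives that $v_1(t)=v_1(0)+\int_0^t f(s)\,ds$ is bounded and has a limit, and $|v_1(s)-v_1(t)|\leq\int_{t-\tau}^t|f(r)|\,dr$ for $s\in[t-\tau,t]$ tends to $0$ as $t\to\infty$. For $l=2$, setting $w_2=v_2-v_1$ and $y_2=x_2-x_1$, differentiating and adding/subtracting $w_2(t)\int_{t-\tau}^t\mu(t-s)\psi_{21}(s)\,ds$ yields
\begin{equation*}
\dot w_2(t)=-w_2(t)\int_{t-\tau}^t\mu(t-s)\psi_{21}(s)\,ds+\int_{t-\tau}^t\mu(t-s)\psi_{21}(s)[v_1(s)-v_1(t)]\,ds-f(t).
\end{equation*}
The last two terms are bounded in modulus by an integrable function $g_2(t)$ that tends to zero. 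Using the analog of (\ref{PP11}) to shift the argument of $\psi$ together with the monotonicity of $\psi$, one obtains an inequality of the form (\ref{XX1}); Lemma \ref{usoLyapG} then provides a uniform bound on $|y_2|$, and a Gronwall argument against the perturbation $g_2$ yields $|w_2(t)|\to 0$ at a rate driven by the tails of $|f|$.

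The inductive step from $l-1$ to $l$ mirrors the corresponding step in the proof of Theorem \ref{flockingDD}: one defines the averages $\hat{x}_l,\hat{v}_l$ over the leader set of agent $l$, sets $y_l=x_l-\hat{x}_l$, $w_l=v_l-\hat{v}_l$, and adds and subtracts $w_l(t)\sum_j\int\mu(t-s)\psi_{lj}(s)\,ds$ as in (\ref{PP22}). Using the induction hypothesis to absorb $\dot{\hat{v}}_l$ and $\sum_j\int\mu(t-s)\psi_{lj}(s)[v_j(s)-\hat{v}_l(t)]\,ds$ into a single error $g_l(t)$, one obtains the analog of (\ref{PP29}), applies Lemma \ref{usoLyapG} to get a uniform bound on $|y_l|$, and closes with Gronwall. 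The main obstacle is keeping track of the decay rate through the induction: because each step propagates the rate through an integrated quantity as in (\ref{Q4}), a perturbation of order $h(t)$ at level $l-1$ becomes one of order roughly $t\,h(t)$ at level $l$. After $N-1$ steps the perturbation is of order $t^{N-2}|f(t)|$ in $L^1$ (as needed to apply Lemma \ref{usoLyapG}) and $t^{N-1}|f(t)|=o(1)$ pointwise (as needed for the final Gronwall estimate $|w_l|\to 0$), which is exactly what (\ref{Q1}) together with (\ref{weneed}) delivers. Finally, the propagation of $|v_i(s)-v_j(t)|\to 0$ for $s\in[t-\tau,t]$ is routine via the bound $|v_l(s)-v_l(t)|\leq\int_s^t|\dot v_l(\sigma)|\,d\sigma$ as in (\ref{Q4}), completing the inductive step.
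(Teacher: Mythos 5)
Your proposal is correct and follows essentially the same route as the paper: induction on the flock size, replacing Lemma \ref{usoLyap} by Lemma \ref{usoLyapG} to handle the non-exponential perturbation, and tracking the decay rate $O(t^{l-2}\vert f\vert)$ through the levels so that the hypotheses $t^{N-2}\vert f\vert\in L^1$ and $\vert f(t)\vert=o((1+t)^{1-N})$ are exactly what is needed at the last step. The only minor imprecision is that the extra factor of $t$ gained at each level comes from the split Gronwall integration as in (\ref{XX8}), rather than from the bound (\ref{Q4}) itself, but this does not affect the argument.
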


\noindent
{\sl Proof.}
As in the previous convergence to consensus result,
we argue by induction. First, we look at  the first agent, i.e. the free-will leader.
Equation (\ref{agent1}) gives
$$v_1(t)=v_1(0)+\int_0^t f(s)\, ds\,,$$
and so, from (\ref{acceleration}),
\begin{equation}\label{Q2}
\vert v_1(t)\vert \le\vert v_1(0)\vert +\Vert f\Vert_1=C_1\,,\quad \forall\ t\ge 0\,.
\end{equation}
Now, let us consider the 2-flock. As before, let us denote
$$w_2(t)=v_2(t)-v_1(t) \quad\mbox{\rm and }\quad y_2(t)=x_2(t)-x_1(t),\quad t\ge 0\,.$$
From (\ref{agent1}) and (\ref{CSShendelayFW})
\begin{equation}\label{Q3}
\begin{array}{l}
\displaystyle{\frac {dw_2}{dt}(t)=\frac {dv_2}{dt}(t)-\frac {dv_1}{dt}(t)
=\int_{t-\tau }^t\mu (t-s)\psi_{21}(s)[v_1(s)-v_2(t)]\, ds-f(t)}\\
\hspace{0.6 cm}\displaystyle{
=(v_1(t)-v_2(t)) \int_{t-\tau}^t \mu (t-s)\psi_{21}(s)\, ds -
\int_{t-\tau}^t \mu (t-s) \psi_{21}(s)[v_1(t)-v_1(s)]\, ds -f(t)}
\\
\hspace{0.6 cm} \displaystyle{=- w_2(t) \int_{t-\tau }^t \mu (t-s)\psi_{21}(s )\,ds -\int_{t-\tau }^t\mu (t-s) \psi_{21}(s )\int_{s}^tf(\sigma )\,d\sigma\, ds -f(t)\,,\quad t\ge\tau\,.
}
\end{array}
\end{equation}
Now, from (\ref{Q1}), it results
\begin{equation}\label{Q4}
\begin{array}{l}
\displaystyle{
\left\vert \int_{t-\tau }^t \mu (t-s) \psi_{21}(s)\int_{s}^tf(\sigma )\,d\sigma\, d s\,\right\vert +\vert f (t)\vert } \\\medskip
\hspace{1.5 cm}\displaystyle{
\le
\tau\mu_0 \max_{s\in [0,+\infty )}\psi (s)\,\int_{t-\tau}^t \vert f(s) \vert \,ds +\vert f(t)\vert =O( \vert f\vert )\,.
}
\end{array}
\end{equation}
Then, from (\ref{Q3}) and (\ref{Q4}) we obtain
\begin{equation}\label{Q5}
\frac {d\vert w_2\vert }{dt}(t)\le -\vert
w_2(t)\vert\int_{t-\tau}^t \mu(t-s)\psi_{21}(s)\, ds  + \tilde f(t)\,,\quad t\ge\tau\,.
\end{equation}
where
\begin{equation}\label{tildef}
\tilde f(t):= \tau\mu_0 \max_{s\in [0,+\infty )}\psi (s)\,\int_{t-\tau}^t \vert f(s) \vert \,ds +\vert f(t)\vert =O(\vert f\vert )\,.
\end{equation}
Therefore,
\begin{equation}\label{Q6}
\vert w_2(t)\vert \le \vert w_2(\tau )\vert +\int_{\tau}^{+\infty} \tilde f(t)\, dt\le D_2,\quad \forall\ t\ge \tau\,,
\end{equation}
for some constant $D_2>0.$
Since
$$y_2(s )=y_2(t)+\int_t^{s}w_2(\sigma )\, d\sigma\,,$$
from (\ref{Q6}) we have
\begin{equation}\label{Q7}
\vert y_2(s)\vert \le \vert y_2(t)\vert +\tau D_2\,,\quad \forall \ s\in [t-\tau, t]\,.
\end{equation}
From (\ref{Q5}) and (\ref{Q7}), we then deduce
\begin{equation}\label{Q8}
\frac {d\vert w_2\vert }{dt}(t)\le -\mu_0\psi (\vert y_2(t)\vert +\tau D_2 )\vert
w_2(t)\vert +\tilde f(t)\,, \quad t\ge\tau\,.
\end{equation}

Then, we can apply Lemma \ref{usoLyapG} to the pair $(y_2, w_2)$ with $d=\mu_0, M=\tau D_2$ and $g=\tilde f\,,$
obtaining that

\begin{equation}\label{XX7}
|{y_2}(t )|+\tau D_2 \leq y^2_R,
\quad t\ge 0\,,
\end{equation}
for a suitable positive constant $y^2_R\,.$
So, from (\ref{Q8}) and (\ref{XX7}) we have
$$\frac{d|{w_2}|}{dt}(t)\leq -\psi(y^2_R)|{w_2}(t)|+ \tilde f(t) ,\quad t\ge \tau\,,$$
and thus, for every $T>\tau ,$ applying  Gronwall's lemma we deduce
\begin{equation}\label{XX8}
\begin{array}{l}
\displaystyle{\vert w_2(T)\vert \le e^{-\psi(y^2_R)\frac T2}\vert w_2( T/2 )\vert +\int_{\frac T2}^Te^{-\psi(y^2_R)(T-t)}\tilde f(t)\, dt}\\
\hspace {1 cm} \displaystyle{\le e^{-\psi(x^2_R)\frac T 2}D_2 +\int_{\frac T2}^T\tilde f(t)\, dt
\le e^{-\psi(x^2_R)\frac T 2}D_2 +\tilde f_2(T) \,,}
\end{array}
\end{equation}
where, recalling (\ref{Q1}), $\tilde f_2,$ is a suitable function satisfying
\begin{equation}\label{f2}
\tilde f_2 (t)=O(t\vert f\vert )=o((1+t)^{2-N})\,.
\end{equation}
Thus,
\begin{equation}\label{XX10}
\vert v_2(t)-v_1(t)\vert =o((1+t)^{2-N})\,.
\end{equation}
Note also that
\begin{equation}\label{XX11}
\vert v_1(t-\tau) -v_1(t)\vert \le \int_{t-\tau}^{t} \vert f (t)\vert\, dt =O(\vert f\vert )\,,
\end{equation}
and then
\begin{equation}\label{XX12}
\begin{array}{l}
\displaystyle{
\vert v_2(t-\tau) -v_2(t)\vert
\le \vert v_2(t-\tau ) -v_1(t-\tau )\vert }\\
\hspace{1.8 cm}\displaystyle{
+
\vert v_1(t-\tau ) -v_1(t)\vert +
\vert v_1(t) -v_2(t)\vert =o((1+t)^{2-N})}\,.
\end{array}
\end{equation}
Therefore, (\ref{XX10})--(\ref{XX12}) imply
\begin{equation}\label{2flockFW}
\vert v_i(t-\tau) -v_j(t)\vert =O(\tilde f_2 )= o((1+t)^{2-N}),\quad \mbox{\rm for}\ \ i,j\in \{1, 2\}\,.
\end{equation}

Now, as induction hypothesis, assume that
for a flock of
 $l-1$ agents $[1, \dots, l-1]$ with $2<l\le N$, we have
\begin{align}
	&|{v_i}(t)-{v_j}(t)|=O(t^{l-2} \vert f\vert ) =o((1+t)^{l-1-N}),\label{inductionassumptiondelay1FW}\\
	&|{v_i}(t-\tau)-{v_j}(t)|=O(t^{l-2} \vert f\vert =o((1+t)^{l-1-N})\,,
	\label{inductionassumptiondelay2FW}
\end{align}
for all $i,j\in \{ 1,\dots, l-1\}.$

 Then, we want to prove  the same kind of estimates  for a  flock with $l$ agents.
This will complete our theorem.

As before, we will use the average position and velocity of the leaders of agent $l$, introduced in (\ref{PP19}) and let $y_l, w_l$ be defined as in  (\ref{PP20}).
Then,
as before we can write
\begin{equation}\label{XX14}	
\frac{d {w_l}}{dt}=-w_l(t)\sum_{j\in\mathcal{L}(l)}
\int_{t-\tau }^t \mu (t-s) \psi_{lj}(s)ds +
\sum_{j\in\mathcal{L}(l)}\int_{t-\tau}^t\mu (t-s) \psi_{lj}(s)[{v_j}(s)-{\hat {v_l}}(t)] ds
-\frac{d {\hat{v_l}}}{dt}.
\end{equation}

Using the induction hypothesis (\ref{inductionassumptiondelay2FW}), since $\mathcal{L}(i),\mathcal{L}(l)\subseteq [1, \dots, l-1]$,

\begin{equation}\label{epsilon1delayFW}
	\frac{d {\hat{v_l}}}{dt}=\frac{1}{d_l}\sum_{i\in\mathcal{L}(l)}\frac{d {v_i}}{dt}=\chi_{1\in\mathcal{L}(l)}\frac{1}{d_l}{f}(t)+\frac{1}{d_l}\sum_{i\in\mathcal{L}(l)\backslash \{1\}}\frac{d {v}_i}{dt}=O(t^{l-2}\vert f\vert )=o((1+t)^{l-1-N}).
\end{equation}
From the induction hypotheses  (\ref{inductionassumptiondelay2FW}) we deduce also

\begin{equation}\label{XX15}
\begin{array}{l}
\displaystyle{
	\sum_{j\in\mathcal{L}(l)}\int_{t-\tau}^t \mu (t-s) \psi_{lj}(s)[{v_j}(s)-{\hat{v_l}}(t)]\;ds}
\\
\displaystyle{\hspace{1 cm}
	= \frac 1 {d_l}\sum_{j\in\mathcal{L}(l)}\int_{t-\tau}^t\mu (t-s)\psi_{lj}(s)
	\Big (\sum_{i\in\mathcal{L}(l)} [v_j(s) -v_i(t)]\Big )\;ds
	=O(t^{l-2}\vert f\vert )=o((1+t)^{l-1-N})\,.}
\end{array}
\end{equation}

Then, identity (\ref{XX14}) can be rewritten as

\begin{equation} \label{XX16}
	\frac{d {w_l}}{dt}(t)=- w_l(t)\sum_{j\in\mathcal{L}(l)}\int_{t-\tau}^t\mu (t-s)\psi_{lj}(s)\;ds+ O(t^{l-2}\vert f\vert ),
\quad t\ge\tau\,.	
\end{equation}

As before one can now observe that
for every $j\in\mathcal{L}(l)$ it results
\begin{equation}\label{XX17}
\begin{array}{l}
\displaystyle{
|{x_l}(s)-{x_j}(s)|\le |{x_l}(s)-{\hat{x}_l}(s)\vert +|{x_j}(s)-{\hat{x}_l}(s)|}
\\
\hspace{2.6 cm}\displaystyle{
\leq |{y_l}(s)|+R_l,}
\end{array}
\end{equation}
for some positive $R_l$, due to the induction's assumption. Thus, (\ref{XX16}) implies

\begin{equation}\label{XX18}
	\frac{d|{w_l}|}{dt}(t)\leq -d_l \vert w_l(t)\vert \;\int_{t-\tau}^t \mu (t-s) \psi\left(|{y_l}(s)|+R_l\right)\; ds+O(t^{l-2}\vert f\vert ),\quad t\ge\tau\,.
\end{equation}

Note that (\ref{XX18}) implies

\begin{equation}\label{XX19}
\frac{d|{w_l}|}{dt}\leq O(t^{l-2}\vert f\vert )\,.
\end{equation}
So, recalling the assumptions (\ref{Q1}) on the acceleration $f$ of the free--will leader, we deduce

\begin{equation}\label{XX20}
\vert w_l(t)\vert \le \vert w_l(\tau)\vert +\int_{\tau}^{+\infty }
O(t^{l-2}\vert f\vert )\, dt \le C_l\,.
\end{equation}
Then,
\begin{equation}\label{XX21}
|{x^l}(t-\tau)|\leq |{x^l}(t)|+\int_{t-\tau }^t \vert v^l(s)\vert \, ds \le |{x^l}(t)|+ C_l\tau\,,
\quad t\ge \tau \,,
\end{equation}
which, used in (\ref{XX18}), gives

\begin{equation}\label{XX22}
	\frac{d|{w_l}|}{dt}(t)\leq -d_l \mu_0 \psi\left(|{y_l}(t)|+2\tau C_l+R_l\right )
|{w_l}(t)|+O(t^{l-2}\vert f\vert ).
\end{equation}

We can then apply Lemma \ref{usoLyapG} to the pair state--velocity $(y_l, w_l)$ and conclude  that
$\vert y_l (t)\vert \le C_l$ for some positive constant $C_l.$ So, for a suitable constant $y_M^l,$
$$\vert y_l(t)\vert +2\tau C_l+R_l\le y_M^l,\quad t\ge\tau\,.$$
Using the above estimate in (\ref{XX22}) we then obtain
$$\frac{d|{w_l}|}{dt}\leq -d_l\mu_0\psi(y^l_M)|{w_l}(t)|+ O(t^{l-2}\vert f\vert )\,.$$
Thus, we can apply the Gronwall's lemma analogously  to the  $2-$flock
case obtaining
\begin{equation}\label{XX50}
\vert v^l(t)\vert =O (t^{l-1} \vert f\vert )=o(t^{l-N})\,.
\end{equation}
Then, from (\ref{XX50}) and the induction hypothesis (\ref{inductionassumptiondelay1FW}), for every $j\in {\mathcal L}(l),$ we have

\begin{equation}
	|{v_l}(t)-{v_j}(t)|\le \vert v_l(t)-\hat v_l(t)\vert +
	\vert \hat v_l(t) -v_j(t)\vert = O (t^{l-1} \vert f\vert )=o(t^{l-N}).
	\label{XX51}
\end{equation}

Now, it remains  to prove  that, for all $i,j\in\{1, \dots,l\}$,

\begin{equation}
|{v_i}(t-\tau)-{v_j}(t)|=O (t^{l-1} \vert f\vert )=o(\vert f\vert^{l-N}).
\label{XX52}
\end{equation}

If $i,j\in\{1, \dots, l-1\}$, then (\ref{XX52}) is true by (\ref{inductionassumptiondelay2FW}). Consider the case $i\in\{1, \dots, l-1\}$ and $j=l$. Then,

$$
|{v_i}(t-\tau )-{v_l}(t)|
\leq
|{v_i}(t-\tau )-{v_i}(t)|+\vert {v_i}(t)-{v_l}(t)|=O (t^{l-1} \vert f\vert )=o(\vert f\vert^{l-N}),$$
by (\ref{inductionassumptiondelay2FW}) and (\ref{XX51}).

For the case  $i=j=l,$ using the previous estimates, we obtain

\begin{equation}
\label{XX60}
\begin{array}{l}
\displaystyle{
|{v_l}(s)-{v_l}(t)|=\left|\int_{s}^t {v_l}^\prime (\sigma )\;d\sigma\right|=\left|\int_{s}^t \sum_{k\in\mathcal{L}(l)}\int_{\sigma -\tau }^{\sigma} \mu (\sigma -r) \psi_{lk}(r)\left({{v_k}(r)-{v_l}(\sigma )}\right)\;d r\; d\sigma\right|}\\
\hspace{2 cm}\displaystyle{
\leq C\int_{s}^t O (\sigma ^{l-1} \vert f(\sigma )\vert ) \;d\sigma = O (t^{l-1} \vert f\vert )\,, \quad s\in [t-\tau , t].}
\end{array}
\end{equation}
Also for the last case, where $j\in\{1, \dots, l-1\}$ and $i=l$, using (\ref{XX51}) and (\ref{XX60}) we obtain

$$|{v_l}(t-\tau)-{v_j}(t)|\leq |{v_l}(t-\tau)-{v_l}(t)|+|{v_l}(t)-{v_j}(t)|=O (t^{l-1}\vert  f \vert )=o(\vert f\vert^{l-N})\,.$$
Therefore, (\ref{XX52}) is satisfied for all $i,j\in \{1,\dots,l\}$ and so the theorem is proved.\qed

\begin{Remark}\label{FWmigliora}
{\rm
Note that our generalization concerning the acceleration function $f$ of the free--will leader
is suitable also for the problem without delay considered by Shen \cite{Shen} and for the problem with pointwise delay studied by the authors \cite{PRV}. Therefore, our flocking estimates (\ref{Q2}) could be obtained, under the same assumptions on $f,$ for the problem with free--will leader studied in \cite{Shen} and the more general one considered in \cite{PRV}.
}
\end{Remark}

\end{document}